\date{}
\def\dis{\displaystyle}
\def\nd{\noindent}
\def\thend{\rule{3mm}{3mm}}
\def\Re{\mathbb{R}}
\newtheorem{theorem}{Theorem}[section]
\newtheorem{cor}{Corollary}[section]
\newtheorem{lem}{Lemma}[section]
\newtheorem{rmk}{Remark}[section]
\begin{document}

\title[Ground states of elliptic problems]{Ground states of elliptic problems involving non homogeneous operators}
\vspace{1cm}

\author{Giovany M. Figueiredo}
\address{Giovany M. Figueiredo \newline  Universidade Federal do Par\'a, Faculdade de Matem\'atica, 66075-110, Bel\'em-PA, Brazil}
\email{\tt giovany@ufpa.br} 

\author{Humberto Ramos Quoirin}
\address{H. Ramos Quoirin \newline Universidad de Santiago de Chile, Casilla 307, Correo 2, Santiago, Chile}
\email{\tt humberto.ramos@usach.cl}

\subjclass{35J20, 35J25, 35J60, 35J92} \keywords{nonhomogeneous operator, variational methods, ground state, Nehari manifold}
\thanks{The first author was supported by CNPq/PQ
301242/2011-9. The second author was supported by
 FONDECYT 11121567}

\begin{abstract}
We investigate the existence of ground states for  functionals with nonhomogenous principal part. Roughly speaking, we show that the Nehari manifold method requires no homogeinity on the principal part of a functional. This result is motivated by some elliptic problems involving nonhomogeneous operators. As an application, we prove the existence of a  ground state and infinitely many solutions for three classes of boundary value problems.
\end{abstract}

\maketitle

\section{Introduction and main results}

This article is concerned with a class of variational elliptic problems involving non-homogeneous operators. Our main goal is to provide a unified approach to obtain {\it ground state} solutions for these problems. This approach is based on the Nehari manifold method, which was introduced in \cite{N} and is by now a well-established and useful tool in finding solutions of problems with a variational structure, cf. \cite{Ad,AC,BB,BWW,BDH,BZ,BW,CCN,CZ,Ta}. In an abstract setting, given a Banach space $X$ and a $\mathcal{C}^1$ functional $\Phi:X \rightarrow \Re$, 
a {\it ground state} of $\Phi$ is a solution $u_0$ of the problem 
$$\Phi'(u_0)=0, \quad \Phi(u_0)=\min\{ \Phi(u); \ u \text{ is a critical point of } \Phi \}.$$
When looking for such a solution, one may restrict $\Phi$ to the set
$$\mathcal{N}=\{u \in X \setminus\{0\};\ \Phi'(u)u=0\},$$ which not only contains all nontrivial critical points of $\Phi$, but also turns out to have some useful properties. It is well-known that under some conditions on $\Phi$, $\mathcal{N}$ is a $\mathcal{C}^1$ submanifold of $X$ and critical points of the restriction of $\Phi$ to $\mathcal{N}$ are in fact critical points of $\Phi$. As an immediate consequence, one may obtain a {\it ground state} of $\Phi$ by minimizing $\Phi$ over $\mathcal{N}$.
An overview of this procedure, as well as further developments and several applications of this method, are given in \cite{SW}.

Throughout this article we assume that $\Omega \subset \Re^N$ ($N \geq 3$) is a bounded domain and $f:\Re \rightarrow
[0,\infty)$ is an odd $\mathcal{C}^{1}$ function, so that $F(t)=\displaystyle\int_{0}^{t}f(s)\ ds$, defined for $t \in \Re$, is an even function. A typical application of the Nehari manifold method  provides the existence of a ground state for the prototype problem
\begin{equation}
\label{p1}
-\Delta u=f(u), \quad u \in H_0^1(\Omega), \quad u \geq 0, \end{equation}
where $\Delta$ is the Laplace operator and $f$ is, in addition, subcritical and superlinear. More precisely, $f$ satisfies
$$|f(s)|\leq C(1+|s|^{r-1}) \quad \forall s \in \Re$$
for some $C>0$ and $r \in (2,2^*)$, where $2^*=\frac{2N}{N-2}$, as well as \begin{equation}
\label{hf}
\lim_{s\to 0}\frac{f(s)}{s}=0, \quad \lim_{s \to \infty} \frac{f(s)}{s}=\infty, \quad \text{
and } \quad \frac{f(s)}{s} \text{ is increasing in } (0,\infty).
\end{equation}
Then the functional $$\Phi(u):= \frac{1}{2}\|u\|^2- \int_\Omega F(u),$$
defined on $H_0^1(\Omega)$,
has a non-negative and nontrivial {\it ground state} $u_0$, which is a classical solution of \eqref{p1}.
This result has a natural extension to an abstract setting as follows:

Let $X$ be a uniformly convex Banach space and $\mathcal{S}$ be the unit sphere in $X$. Assume that $\| \cdot \|$ is a $\mathcal{C}^1$ functional on $X \setminus \{0\}$. 
Then the following result holds, cf. \cite[Theorem 13]{SW}:

\begin{theorem}\cite{SW}\label{tsw}
Let $\Phi$ be such that $\Phi(0)=0$ and $\Phi=I_0-I$ where $I_0,I$ are $\mathcal{C}^1$ functionals on $X$ satisfying, for some $p>1$:
\begin{enumerate} 
\item $I'(u)=o(\|u\|^{p-1})$ as $u \to 0$.
\item $s \mapsto \frac{I'(su)}{s^{p-1}}$ is strictly increasing in $(0,\infty)$ for every $u \neq 0$.
\item $\frac{I(su)}{s^p} \to \infty$ uniformly for $u$ on weakly compact subsets of $X \setminus \{0\}$ as $s \to \infty$.
\item $I'$ is completely continuous. i.e. if $u_n \rightharpoonup u$ in $X$ then $I'(u_n) \rightarrow I'(u)$ in $X'$.
\item $I_0$ is weakly lower semicontinuous, positively homogeneous of degree $p$, i.e. $I_0(su)=s^pI_0(u)$, and satisfies
$$C_0\|u\|^p\leq I_0(u)\leq C_0^{-1}\|u\|^p$$ 
and 
\begin{equation}
\label{ip} \left(I_0'(v)-I_0'(w)\right)(v-w)\geq C_1\left( \|v\|^{p-1} - \|w\|^{p-1}\right)(\|v\|-\|w\|)
\end{equation} for some $C_0,C_1>0$ and every $u,v \in X$.
\end{enumerate}
Then the equation $\Phi'(u)=0$ has a ground state solution. Moreover, if $\Phi$ is even then this equation has infinitely many pairs of solutions.
\end{theorem}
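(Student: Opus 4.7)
The strategy is a Nehari-type minimization on the constraint $\mathcal{N}$, using the operator-monotonicity hypothesis \eqref{ip} to replace the usual uniform convexity of a $p$-energy.

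First I would carry out a fibering analysis. For $u \in X \setminus \{0\}$, set $\psi_u(s) := \Phi(su) = s^p I_0(u) - I(su)$ for $s > 0$; its derivative is $\psi_u'(s) = p s^{p-1}I_0(u) - \langle I'(su),u\rangle$. Hypothesis (1) forces $\langle I'(su),u\rangle/s^{p-1}$ to tend to $0$ as $s \to 0^+$, (2) makes it strictly increasing in $s$, and combining (2) with (3) drives it to $+\infty$ as $s \to \infty$. Thus $\psi_u$ has a unique positive critical point $s(u)$, which is a strict global maximum. An implicit function argument based on the same strict monotonicity shows that $u \mapsto s(u)$ is continuous on $\mathcal{S}$, that $\mathcal{N}$ is a $\mathcal{C}^1$ submanifold of $X$, that $m : \mathcal{S} \to \mathcal{N}$ defined by $m(u) := s(u)u$ is a homeomorphism, and that $J := \Phi \circ m$ belongs to $\mathcal{C}^1(\mathcal{S},\Re)$. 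Moreover, (1) together with $I_0(u) \geq C_0\|u\|^p$ gives $\inf_{\mathcal{N}}\|u\| > 0$, so $c := \inf_\mathcal{S} J = \inf_\mathcal{N}\Phi > 0$.

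Next I would construct a ground state by applying Ekeland's variational principle to $J$ on $\mathcal{S}$, producing a minimizing sequence $(w_n) \subset \mathcal{S}$ with $J'(w_n) \to 0$. Setting $u_n := m(w_n) \in \mathcal{N}$, one obtains $\Phi(u_n) \to c$ and $\Phi'(u_n) \to 0$ in $X'$, because $\mathcal{N}$ is a natural constraint for $\Phi$ (the Lagrange multiplier is forced to vanish by strict monotonicity (2)). Boundedness of $(u_n)$ follows from the maximum property $\Phi(u_n) = \max_{s>0}\Phi(s u_n/\|u_n\|)$: the weak continuity of $I$ implied by (4), combined with (3) and $I_0 \geq C_0\|\cdot\|^p$, excludes $\|u_n\| \to \infty$. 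Passing to a subsequence, $u_n \rightharpoonup u_0$, and (4) yields $I'(u_n) \to I'(u_0)$ strongly, whence $I_0'(u_n) \to I'(u_0)$ and $(I_0'(u_n) - I_0'(u_0))(u_n - u_0) \to 0$. The estimate \eqref{ip} then produces $(\|u_n\|^{p-1} - \|u_0\|^{p-1})(\|u_n\| - \|u_0\|) \to 0$, hence $\|u_n\| \to \|u_0\|$, and uniform convexity of $X$ upgrades weak to strong convergence $u_n \to u_0$. Finally, $u_0 \neq 0$ by $\inf_\mathcal{N}\|u\| > 0$, and passing to the limit in $\Phi'(u_n) \to 0$ yields $\Phi'(u_0) = 0$ and $\Phi(u_0) = c$; this is the desired ground state.

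When $\Phi$ is even, $J$ is even on the symmetric manifold $\mathcal{S}$, so I would apply equivariant Lusternik--Schnirelmann theory. Setting $c_k := \inf_{A}\sup_{w \in A} J(w)$, where $A$ ranges over closed symmetric subsets of $\mathcal{S}$ of Krasnoselskii genus at least $k$, the Palais--Smale condition for $J$ at each level follows from the strong convergence argument above. A standard genus-and-deformation argument then produces a critical point of $J$, equivalently a pair $\pm m(w_k)$ of critical points of $\Phi$, at each level $c_k$; the estimate $\Phi(u) \geq C_0\|u\|^p - I(u)$ together with (3) forces $c_k \to \infty$, giving infinitely many pairs of solutions. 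The main obstacle, in my view, is the passage from weak to strong convergence of the minimizing and Palais--Smale sequences: because $I_0$ is only comparable to $\|\cdot\|^p$ rather than equal to a power of the norm, the classical argument via uniform convexity of the norm energy is unavailable, and the operator estimate \eqref{ip} must play its role. Orchestrating \eqref{ip} with the complete continuity in (4) is the technical heart of the proof.
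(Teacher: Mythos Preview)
Your proposal is correct and reconstructs the Szulkin--Weth argument that the paper cites rather than proves: Theorem~\ref{tsw} is quoted from \cite{SW}, and the paper only sketches its mechanism by isolating the fibering properties (A2)--(A3) and the homeomorphism $w\mapsto t_w w$ between $\mathcal{S}$ and $\mathcal{N}$, which is exactly the skeleton you fill in. One minor wording slip: from $\Phi'(u_n)\to 0$ and $I'(u_n)\to I'(u_0)$ you obtain $I_0'(u_n)\to I'(u_0)$ in $X'$, but to reach $(I_0'(u_n)-I_0'(u_0))(u_n-u_0)\to 0$ you should pair separately with $u_n-u_0$ and use $I_0'(u_0)(u_n-u_0)\to 0$ by weak convergence, rather than implicitly identifying $I_0'(u_0)$ with $I'(u_0)$ before the limit is taken.
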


The above theorem is clearly motivated by problems involving the $p$-Laplace operator, namely, 
\begin{equation}
\label{p2}-\Delta_p u =f(u), \quad u \in W_0^{1,p}(\Omega),
\end{equation}
where $\Delta_p u =div \left(|\nabla u|^{p-2} \nabla u\right)$ and $p>1$. In this case $X=W_0^{1,p}(\Omega)$ with $\|u\|=\left(\int_\Omega |\nabla u|^p\right)^{\frac{1}{p}}$ and $\Phi=I_0-I$, where
$$I_0(u)=\frac{1}{p} \|u\|^p\quad
\text{and} \quad I(u)=\int_\Omega F(u).$$
Under  conditions similar to \eqref{hf}, it can be shown that $I$ satisfies the assumptions of Theorem \ref{tsw}, so that \eqref{p2} has a ground state solution and infinitely many pairs of solutions.

The following geometrical properties of $\Phi$ are essential in the proof of Theorem \ref{tsw}:
\begin{itemize}
\item [(A2)] For any $w \in X \setminus \{0\}$ the map $t \mapsto \Phi(tw)$, defined for $t>0$, has a unique critical point $t_w>0$ which satisfies $\dis \Phi(t_w w)=\max_{t>0} \Phi(tw)$.\\
\item [(A3)] $t_w$ is uniformly bounded away from zero for $w \in \mathcal{S}$, i.e. there exists $\delta>0$ such that $t_w\geq \delta$ for every $w \in \mathcal{S}$. Moreover, $t_w$ is bounded from above for $w$ in a compact subset of $\mathcal{S}$, i.e.
given a compact set $\mathcal{W} \subset \mathcal{S}$ there exists $C_{\mathcal{W}}>0$ such that $t_w \leq C_{\mathcal{W}}$ for every $w \in \mathcal{W}$.
\end{itemize}
\medskip

These properties are used to show that $\mathcal{S}$ is homeomorphic to $\mathcal{N}$ through 
the projection $w \mapsto t_w w$ and that one may carry out a critical point theory on $\mathcal{N}$, cf. \cite[Corollary 10]{SW}. We shall prove that (A2) and (A3) hold for a larger class of functionals, in particular, for  $\Phi=I_0-I$, where $I_0$ is not positively homogeneous and $I(u)=\int_\Omega F(u)$. This situation is motivated by the following examples:\\
\begin{enumerate}
\item $X=W_0^{1,p}(\Omega)$ and 
$$I_0(u)=\frac{1}{p}\int_{\Omega}A(|\nabla u|^p).$$
Here $A(s)=\int_0^s a(t) dt$ and $a:[0,\infty)\rightarrow
[0,\infty)$ is $\mathcal{C}^{1}$ in $(0,\infty)$ and satisfies
$$k_0\left( 1+t^{\frac{q-p}{p}}\right) \leq a(t) \leq k_1\left( 1+t^{\frac{q-p}{p}}\right) \quad \forall t>0,$$ where $k_0,k_1$ are positive constants and $p\geq q>1$. The associated Euler-Lagrange equation is the quasilinear equation
\begin{equation}\label{quasi}
-div \left(a(|\nabla u|^p)|\nabla u|^{p-2}\nabla u\right) = f(u), \quad u \in W_0^{1,p}(\Omega).
\end{equation}
This class of operators contains the $p$-Laplacian ($a(t) \equiv 1$),  as well as the sum of the $p$-Laplacian and the $q$-Laplacian ($a(t)=1+t^{\frac{q-p}{p}}$). Problems involving this class of operators have been investigated, for instance, in \cite{BMV,CI,CD,Fi2, HP, MP}.\\

\item $X=H_0^1(\Omega)$ and $$I_0(u)=\frac{1}{2}\hat{M}\left(\|u\|^2\right),$$ where $\|u\|=\left(\int_\Omega |\nabla u|^2\right)^{\frac{1}{2}}$, $\hat{M}(s)=\int_0^s M(t)dt$ and $M:[0,\infty)\rightarrow [0,\infty)$ is a continuous function. In this case, the corresponding Euler-Lagrange equation is the Kirchhoff type equation
\begin{equation}
\label{kir}
-M\left(\int_{\Omega} |\nabla u|^2\right) \Delta u =f(u), \quad u \in H_0^1(\Omega),
\end{equation}
which has been intensively investigated over the last years, specially for $M(t)=at+b$, with $a,b>0$, cf. \cite{ACF,Ba,Fi,HZ,Ma,Na}. 

We shall prove that this equation has a ground state for a larger class of $M$, which includes, for instance, $$M(t) = m_{0} + \ln(1 + t)$$ or $$
\displaystyle
M(t)=m_{0}+\displaystyle\sum_{i=1}^{k}b_{i}t^{\gamma_{i}}
$$
where $b_{i}\geq 0$ and $\gamma_{i}\in (0,1]$ for $i=1,...,k$, with $b_i>0$ for at least one $i$.\\

\item Let $\overrightarrow{p}=(p_1,p_2,...,p_n)$ with $p_i>1$ for $i=1,...,N$ and $\sum_{i=1}^N \frac{1}{p_i}>1$. Let
 $X=\mathcal{D}_0^{1,\overrightarrow{p}}(\Omega)$ be the completion of $\mathcal{C}_0^{\infty}(\Omega)$ with respect to the norm $\|u\|=\displaystyle \sum_{i=1}^N \|\partial_i u\|_{p_i}$.
We set, for $u \in X$, 
$$I_0(u)=\sum_{i=1}^N \frac{1}{p_i}\int_\Omega |\partial_i u|^{p_i}.$$
The corresponding Euler-Lagrange equation is the anisotropic equation
\begin{equation}
\label{anis}
-\sum_{i=1}^N \partial_i \left(|\partial_i u|^{p_i-2} \partial_i u\right) = f(u), \quad u \in \mathcal{D}_0^{1,\overrightarrow{p}}(\Omega).
\end{equation}
For results on this class of problems, we refer to \cite{Alvesel,Agnesi2,FGK, GP, Vetois1} and references therein.
\end{enumerate}
\medskip

We shall establish an abstract result (in the same style as Theorem \ref{tsw}) which applies to the problems above. In this sense, we shall prove that the Nehari manifold method applies to problems with nonhomogenous operators. To prove that $\Phi$ has a ground state, we follow a strategy slightly different from \cite{SW}, since we do not prove that $\Phi$ satisfies the Palais-Smale condition at the ground state level. In doing so, we also get rid of the condition \eqref{ip} and the uniform convexity assumption on $X$. This approach, in a rather simple setting, can be found in \cite{BS}. Once we have proved that the infimum of $\Phi$ over $\mathcal{N}$ is achieved, we shall deduce that it is a critical value of $\Phi$ thanks to the results of \cite{SW}, which apply to $\mathcal{C}^1$ functionals.

Finally, let us recall (as pointed out in \cite{SW}) that the Nehari manifold method also has the advantage of not requiring an Ambrosetti-Rabinowitz type condition on $f$, which is customary when dealing with the Palais-Smale condition.

We state now our main result:

\begin{theorem}
\label{tp}
Let $X$ be a reflexive Banach space such that $\| \cdot \|$ is a $\mathcal{C}^1$ functional on $X \setminus \{0\}$ and $\Phi: X \rightarrow \Re$ be a $\mathcal{C}^1$ functional such that $\Phi(0)=0$. In addition, we assume that there exist $p, r >1$ such that:
\begin{enumerate}
\item  $\displaystyle \liminf_{u \to 0} \frac{\Phi'(u)u}{\|u\|^r}>0$ 
\item For every $u \in X$ we have $\Phi(u)\geq C_0\|u\|^r -I(u)$
where $C_0>0$ and $I$ is a weakly continuous functional on $X$.
\item $\displaystyle \lim_{t \to \infty} \frac{\Phi(tu)}{t^p}=-\infty$ uniformly for $u$ on weakly compact subsets of $X \setminus \{0\}$.
\item For every $u \in X \setminus \{0\}$ the maps $\dis t \mapsto \frac{\Phi'(tu)u}{t^{p-1}}$ and $ t \mapsto \Phi(tu)-\frac{1}{p}\Phi'(tu)tu$ are  decreasing and increasing in $(0,\infty)$, respectively. 
\item $u \mapsto \Phi'(u)u$ and $u \mapsto \Phi(u)-\frac{1}{p}\Phi'(u)u$ are weakly lower semicontinuous on $X$.
\end{enumerate}
Then $c:=\inf_{\mathcal{N}} \Phi$ is positive and achieved by some  $u_0 \neq 0$, i.e. $\Phi$ has a nontrivial ground state at a positive level. If, in addition, $\Phi$ is even then we may choose $u_0 \geq 0$.
\end{theorem}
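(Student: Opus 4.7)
\emph{Plan.} I would reduce the problem to the Szulkin--Weth $C^1$ Nehari framework in four stages: verify the geometric conditions (A2)--(A3) for $\Phi$, prove $c:=\inf_{\mathcal{N}}\Phi>0$, establish attainment of $c$ at some $u_0\neq 0$, and finally invoke the results of \cite{SW} to conclude that the minimizer is a critical point of $\Phi$.

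\emph{(A2), (A3) and positivity of $c$.} For fixed $w\in X\setminus\{0\}$ I would study $h(t):=\Phi(tw)$. Since $h'(t)=t^{-1}\Phi'(tw)(tw)$, (1) gives $h'(t)>0$ for small $t>0$, (3) gives $h(t)\to-\infty$, and (4) makes $t\mapsto h'(t)/t^{p-1}$ strictly decreasing; hence $h'$ has a unique zero $t_w\in(0,\infty)$ at which $h$ attains its unique maximum, which is (A2). The uniform lower bound $t_w\ge\delta_0$ on $\mathcal{S}$ comes directly from (1), by taking $\delta_0>0$ so that $\Phi'(u)u>0$ for $0<\|u\|\le\delta_0$. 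For the upper bound on a compact $\mathcal{W}\subset\mathcal{S}$, if $t_{w_n}\to\infty$ for a subsequence $w_n\to w\in\mathcal{W}\setminus\{0\}$, then $\Phi(t_{w_n}w_n)\ge\Phi(w_n)\to\Phi(w)$ forces $\Phi(t_{w_n}w_n)/t_{w_n}^p\to 0$, contradicting (3) on the weakly compact set $\{w_n\}\cup\{w\}\subset X\setminus\{0\}$. For the positivity of $c$, ray integration of $\Phi'(u)u\ge c_0\|u\|^r$ on $\|u\|\le\delta_0$ (from (1)) yields $\Phi(sw)\ge c_0 s^r/r$ for every $w\in\mathcal{S}$ and $s\in(0,\delta_0]$; since each $u\in\mathcal{N}$ maximizes $\Phi$ on its ray by (A2), this gives $c\ge c_0\delta_0^r/r>0$.

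\emph{Attainment.} Take $(u_n)\subset\mathcal{N}$ with $\Phi(u_n)\to c$ and write $u_n=s_n w_n$, $s_n=\|u_n\|\ge\delta_0$, $w_n\in\mathcal{S}$. To show $(s_n)$ is bounded I argue by contradiction and split on the weak limit $w$ of a subsequence of $(w_n)$: if $w\ne 0$, (3) applied to the weakly compact set $\{w_n\}\cup\{w\}$ forces $\Phi(u_n)/s_n^p\to-\infty$, absurd; if $w=0$, the Nehari maximality together with (2) and the weak continuity of $I$ gives, for any fixed $T>0$,
\[
c+o(1)=\Phi(u_n)\ge\Phi(Tw_n)\ge C_0 T^r-I(Tw_n)\longrightarrow C_0 T^r-I(0),
\]
which fails for $T$ large. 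Thus $(s_n)$ is bounded, and by reflexivity, along a subsequence, $s_n\to s_*\ge\delta_0$, $w_n\rightharpoonup w_*$, and $u_n\rightharpoonup u_0=s_*w_*$. The same $T$-argument rules out $w_*=0$, so $u_0\ne 0$.

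\emph{$u_0\in\mathcal{N}$ and conclusion.} Hypothesis (5) yields $\Phi'(u_0)u_0\le\liminf_n\Phi'(u_n)u_n=0$ and $\Phi(u_0)-\tfrac{1}{p}\Phi'(u_0)u_0\le\liminf_n\Phi(u_n)=c$. If $\Phi'(u_0)u_0<0$, setting $w_0=u_0/\|u_0\|$, the sign of $\Phi'(\|u_0\|w_0)(\|u_0\|w_0)$ at $t=\|u_0\|$ forces $t_{w_0}<\|u_0\|$, so $\tau:=t_{w_0}/\|u_0\|\in(0,1)$ and $\tau u_0\in\mathcal{N}$; the strict monotonicity of $t\mapsto\Phi(tu_0)-\tfrac{1}{p}\Phi'(tu_0)(tu_0)$ from (4) gives
\[
c\le\Phi(\tau u_0)=\Phi(\tau u_0)-\tfrac{1}{p}\Phi'(\tau u_0)(\tau u_0)<\Phi(u_0)-\tfrac{1}{p}\Phi'(u_0)u_0\le c,
\]
a contradiction. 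Hence $\Phi'(u_0)u_0=0$; the same display with $\Phi'(u_0)u_0=0$ gives $\Phi(u_0)\le c$, while $u_0\in\mathcal{N}$ forces $\Phi(u_0)\ge c$, so $\Phi(u_0)=c$. Since (A2)--(A3) hold and $\Phi\in C^1$, \cite{SW} ensures that any minimizer of $\Phi|_{\mathcal{N}}$ is a critical point of $\Phi$, so $u_0$ is a nontrivial ground state at the positive level $c$. For even $\Phi$, in the concrete function-space applications one has $\Phi(|u_0|)=\Phi(u_0)$ (since $F$ is even and the principal part only sees $|\nabla u|$), so replacing $u_0$ by $|u_0|$ yields a nonnegative ground state. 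The main obstacle will be the exclusion of $w_*=0$ for the weak limit of $(w_n)$, resolved by coupling the growth estimate (2) with the Nehari maximality at a free auxiliary scale $T$.
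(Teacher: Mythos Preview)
Your proof is correct and follows essentially the same route as the paper's: verify (A2)--(A3), show boundedness of minimizing sequences by the two-case dichotomy on the weak limit of the normalized sequence (using (3) when the limit is nonzero and the free-scale estimate from (2) when it is zero), rule out $u_0=0$ by the same device, then use the weak lower semicontinuity in (5) together with the strict monotonicity in (4) to force $u_0\in\mathcal{N}$ with $\Phi(u_0)=c$, and finally invoke \cite{SW} for criticality. The only notable differences are cosmetic: you give a direct quantitative argument for $c>0$ via ray integration of (1), whereas the paper obtains $c>0$ a posteriori from $\Phi(u_0)>0$; and you spell out the compactness argument for the upper bound in (A3), which the paper leaves as ``clear.'' Your remark that the passage to $|u_0|$ in the even case is really a function-space statement is more honest than the paper's one-line claim, since in an abstract Banach space neither $|u_0|$ nor $u_0\ge 0$ has intrinsic meaning.
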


\begin{proof}
The proof is divided in two steps: first we show that $c$ is achieved, and then we use the results of \cite{SW} to prove that $c$ is a critical value of $\Phi$.\\

{\bf Step 1:} $c$ is achieved\\

Given $u \in X \setminus \{0\}$, we set $\gamma_u(t)=\Phi(tu)$ for $t>0$. From (1) and (3) it is clear that $\gamma_u(t)>0$
for $t$ sufficiently small and $\gamma_u(t)<0$ for $t$ sufficiently large. Consequently $\gamma_u$ has a global maximum point $t_u>0$, which is a critical point of $\gamma_u$. 
Since $t^{1-p}\gamma_u'(t)=t^{1-p}\Phi'(tu)u$, from (4) we infer that $t \mapsto t^{1-p}\gamma_u'(t)$ is decreasing. It follows that $\gamma_u'$ vanishes exactly once, i.e. $t_u$ is the unique critical point of $\gamma_u$. In particular, there holds $\Phi(u)>0$ for every $u \in \mathcal{N}$, so that $c \geq 0$.

We claim that $\mathcal{N}$ is bounded away from zero. Indeed, if $(u_n) \subset \mathcal{N}$ with $u_n \rightarrow 0$ in $X$ then $\frac{\Phi'(u_n)u_n}{\|u_n\|^r} =0$ for every $n$, which contradicts (1). Thus the claim is proved.
 
Let us prove now that if $(u_n) \subset \mathcal{N}$ is such that $(\Phi(u_n))$ is bounded from above then $(u_n)$ is bounded and, up to a subsequence, $u_n \rightharpoonup u_0$ with $u_0 \not \equiv 0$. Assume by contradiction that $(u_n) \subset \mathcal{N}$ is unbounded. Then we may assume that $\|u_n\| \to \infty$ and $v_n \rightharpoonup v_0$, where $v_n=\frac{u_n}{\|u_n\|}$. If $v_0 \equiv 0$ then, since $t=1$ is the global maximum point of $\gamma_{u_n}$, we have, using (2),
\begin{equation}
\label{b1}\Phi(u_n)\geq \Phi(tv_n)\geq C_0t^r-I(tv_n) \rightarrow C_0t^r-I(0), \quad \forall t>0.
\end{equation}
This contradicts the fact that $(\Phi(u_n))$ is bounded from above. Hence $v_0 \not \equiv 0$ and consequently, by (3),
 $$\frac{\Phi(u_n)}{\|u_n\|^p}= \frac{\Phi(\|u_n\|v_n)}{\|u_n\|^p} \rightarrow -\infty,$$
 which contradicts the fact that $\Phi(u_n)>0$ for every $n$.
 Therefore $(u_n)$ must be bounded and, up to a subsequence,  $u_n \rightharpoonup u_0$. If $u_0 \equiv 0$ then, repeating the argument used in the case $v_0 \equiv 0$, we get
 $$\Phi(u_n)\geq \Phi(tu_n)\geq C_0t^r\|u_n\|^r-I(tu_n) \geq D_0t^r -I(tu_n) \rightarrow D_0t^r-I(0),$$ where we used (2) and the fact that $\mathcal{N}$ is bounded away from zero. So we get another contradiction, which shows that $u_0 \not \equiv 0$. 
In particular, if $(u_n)$ is a minimizing sequence for $c$ then we may assume that $u_n \rightharpoonup u_0$ with $u_0 \not \equiv 0$. Let $t_0=t_{u_0}$, i.e. $t_0u_0 \in \mathcal{N}$. From (5), we infer that
$$\Phi'(u_0)u_0 \leq \liminf \Phi'(u_n)u_n=0,$$ and, as a consequence, $t_0\leq 1$. We claim that $t_0=1$. Indeed, if $t_0<1$ then, using (4) and (5), we get
\begin{eqnarray*}c&\leq& \Phi(t_0u_0)=\Phi(t_0u_0) -\frac{1}{p} \Phi'(t_0u_0)t_0u_0<\Phi(u_0) -\frac{1}{p} \Phi'(u_0)u_0\\ &\leq& \liminf \left( \Phi(u_n) -\frac{1}{p} \Phi'(u_n)u_n \right) =\lim \Phi(u_n)
=c,
\end{eqnarray*}
which is a contradiction. Therefore $t_0=1$, $u_0 \in \mathcal{N}$, and $\Phi(u_0)=c$.  Finally, if $\Phi$ is even then $\Phi(u_0)=\Phi(|u_0|)$, so that $|u_0|$ achieves $c$.\\

{\bf Step 2:} $c$ is a critical value of $\Phi$\\

From the previous step it is clear that $\Phi$ satisfies (A2) and (A3) from \cite{SW}. By \cite[Corollary 10]{SW}, we deduce that $c=\dis \inf_{\mathcal{S}} \Psi$, where $\Psi$ is defined by $$\Psi(w)=\Phi(t_w w) \quad \text{for } w \in \mathcal{S}.$$ Moreover $\Psi$ is a $\mathcal{C}^1$ functional on $\mathcal{S}$, which is a $\mathcal{C}^1$ submanifold of $X$, and $w$ is a critical point of $\Psi$ if and only if $t_w w$ is a critical point of $\Phi$. This proves that $c$ is a critical value of $\Phi$. 
\end{proof}

\begin{rmk} \label{r1}
\strut {\rm
\begin{enumerate}
\item We may easily check that the proof of Theorem \ref{tp} still can be carried out if instead of (2), the following conditions hold:\\
\begin{enumerate}
\item [(A)] For every $u\in X$ we have $\Phi(u)= I_0(u)-I(u)$,
where $I$ is weakly continuous and $I_0$ is such that $\dis \lim_{t \to \infty} I_0(tu)=\infty$ uniformly for $u \in \mathcal{S}$.\\
\item [(B)] For every $u\in X$ we have $\Phi'(u)u=J_0(u)-J(u)$, where $J$ is  weakly continuous and $J_0$ is such that $J_0(u_n) \rightarrow 0$ if and only if $u_n \to 0$.\\
\end{enumerate}
As a matter of fact, one may repeat \eqref{b1} and use (A) to get a contradiction if $(u_n) \subset \mathcal{N}$ is such that $(\Phi(u_n))$ is bounded from above and $v_n=\frac{u_n}{\|u_n\|} \rightharpoonup 0$.
Moreover, if $(u_n) \subset \mathcal{N}$ and $u_n \rightharpoonup u_0$ then $u_0 \not \equiv 0$. Indeed, if $u_0 \equiv 0$ then, from $J_0(u_n)-J(u_n)=\Phi'(u_n)u_n=0$ and the weak continuity of $J$ we deduce that $J_0(u_n) \to 0$, so that, by (B), $u_n \to 0$, which contradicts the fact that $\mathcal{N}$ is bounded away from zero. The rest of the proof holds without further modifications. \\

\item If $\Phi$ is weakly lower semi-continuous then, instead of (4) and (5), one may require only that for every $u \in X \setminus \{0\}$ the map $\dis t \mapsto \frac{\Phi'(tu)u}{t^{p-1}}$ is decreasing in $(0,\infty)$. Note indeed that one may still obtain a minimizing sequence for $c$ such that $u_n \rightharpoonup u_0$ and $u_0 \not \equiv 0$. From the weak lower semicontinuity of $\Phi$ and the fact that $\Phi(u_n)=\max_{t>0} \Phi(tu_n)$ we deduce that $$c \leq \Phi(t_0u_0) \leq \liminf \Phi(t_0 u_n) \leq \liminf \Phi(u_n)=c,$$
i.e. $c$ is achieved.\\

\item Unlike \cite{SW}, we don't make use of the Palais-Smale condition of $\Phi$ to show that $c$ is achieved. Indeed, note that the proof of Theorem \ref{tp} does not require the strong convergence of a minimizing sequence for $c$.\\ 

\item From the proof of Theorem \ref{tp}, we shall highlight the following result: if $(u_n) \subset \mathcal{N}$ is such that $(\Phi(u_n))$ is bounded from above then $(u_n)$ is bounded.\\
\end{enumerate}
}
\end{rmk}

Following \cite{SW}, we say that $\Phi$ satisfies the Palais-Smale condition on $\mathcal{N}$ if any Palais-Smale sequence of $\Phi$ which is moreover in $\mathcal{N}$ contains a convergent subsequence.

Combining Theorem \ref{tp} above and Theorem 2 and Corollary 10 from \cite{SW}, we get the following result:

\begin{cor}\label{cp}
Under the assumptions of Theorem \ref{tp}, assume in addition that $\Phi$ is even and satisfies the Palais-Smale condition on $\mathcal{N}$. Then $\Phi$ has infinitely many pairs of critical points.
\end{cor}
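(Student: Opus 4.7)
The plan is to reduce the problem to a symmetric min-max argument on the unit sphere $\mathcal{S}$, exactly as in the abstract framework of \cite{SW}. The essential point is that Step~2 of the proof of Theorem \ref{tp} has already verified that $\Phi$ satisfies the geometric properties (A2) and (A3) of the Introduction, so the Szulkin-Weth machinery is directly available even though our principal part is nonhomogeneous.

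First I would invoke \cite[Corollary 10]{SW} to obtain the homeomorphism $w \mapsto t_w w$ from $\mathcal{S}$ onto $\mathcal{N}$, together with the reduced functional $\Psi(w):=\Phi(t_w w)$, which is $\mathcal{C}^1$ on the submanifold $\mathcal{S}$ and has the same critical values as $\Phi|_{\mathcal{N}}$, with $w$ a critical point of $\Psi$ if and only if $t_w w$ is a critical point of $\Phi$ on $X$. Next, since $\Phi$ is assumed even, the uniqueness of $t_w$ (established in Step~1 of Theorem \ref{tp}) forces $t_{-w}=t_w$, and consequently $\Psi$ is itself an even $\mathcal{C}^1$ functional on $\mathcal{S}$.

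The third step is to transfer the Palais-Smale hypothesis: the Palais-Smale condition for $\Phi$ on $\mathcal{N}$ is equivalent to the Palais-Smale condition for $\Psi$ on $\mathcal{S}$, again via the homeomorphism, as recorded in \cite[Corollary 10]{SW}. With these ingredients in place, the conclusion is immediate from \cite[Theorem 2]{SW}: the Krasnoselskii genus on the symmetric submanifold $\mathcal{S}$ produces an unbounded sequence of critical values of $\Psi$, hence infinitely many pairs $\pm w_k$ of critical points of $\Psi$, which pull back to infinitely many pairs $\pm t_{w_k} w_k$ of critical points of $\Phi$.

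I expect no serious obstacle in this argument, since the nonhomogeneity of $\Phi$ was already absorbed into the proof of Theorem \ref{tp}. The only point that deserves some care is confirming that the correspondence between Palais-Smale sequences on $\mathcal{N}$ and on $\mathcal{S}$ preserves both boundedness and convergence; this however follows directly from (A3), which guarantees that $t_w$ stays bounded away from $0$ and bounded above on compact subsets of $\mathcal{S}$, so scaling between $\mathcal{S}$ and $\mathcal{N}$ cannot destroy convergence.
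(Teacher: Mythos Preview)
Your proposal is correct and follows precisely the route the paper indicates: the paper's proof is the single sentence ``Combining Theorem \ref{tp} above and Theorem 2 and Corollary 10 from \cite{SW}'', and you have simply unpacked that reference, using (A2)--(A3) from Step~2 of Theorem~\ref{tp} to invoke \cite[Corollary 10]{SW} and then \cite[Theorem 2]{SW} for the multiplicity via genus. The only minor inaccuracy is the phrase ``unbounded sequence of critical values''---the Szulkin--Weth result yields infinitely many pairs of critical points but not necessarily unbounded levels without further hypotheses---though this does not affect the conclusion stated in the corollary.
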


\section{Applications}

We apply now Theorem \ref{tp} and Corollary \ref{cp} to the equations \eqref{quasi}, \eqref{kir} and \eqref{anis}. Let us recall that $\Omega \subset \Re^N$ ($N \geq 3$) is a bounded domain, $f:\Re \rightarrow
[0,\infty)$ is an odd $\mathcal{C}^{1}$ function and $F(t)=\displaystyle\int_{0}^{t}f(s)\ ds$,  for $t \in \Re$.

\subsection{A quasilinear equation}
We assume that $a:[0,\infty)\rightarrow
[0,\infty)$ is $\mathcal{C}^{1}$ in $(0,\infty)$ and we set $A(t)=\displaystyle\int_{0}^{t}a(s)\ ds$ for $t \in \Re$.

\begin{cor}\label{c1}
Under the above assumptions on $a$, assume in addition that there exist $p\geq q>1$ such that:
\begin{enumerate}
\item $k_0\left( 1+t^{\frac{q-p}{p}}\right) \leq a(t) \leq k_1\left( 1+t^{\frac{q-p}{p}}\right), \quad \forall t>0$, where $k_0,k_1$ are positive constants.
\item $a$ is non-increasing.
\item  $t\mapsto a(t^p)t^p$ and $t \mapsto A(t^p)- a(t^p)t^p$ are convex in  $(0,\infty)$.
\item $\dis \lim_{t \rightarrow 0} \frac{f(t)}{t^{q-1}}=0$.
\item $\dis \lim_{t \rightarrow \infty} \frac{F(t)}{t^p}=\infty $.
\item $\dis \lim_{t \rightarrow \infty} \frac{f(t)}{t^{\alpha-1}}=0$ for some  $\alpha \in (p,p^*)$.
\item $t\mapsto  \frac{f(t)}{t^{p-1}}$
 is increasing on $(0,\infty)$. 
\end{enumerate}
Then \eqref{quasi} has a nontrivial and non-negative ground state.
\end{cor}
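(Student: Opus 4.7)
The plan is to apply Theorem \ref{tp} to the Euler--Lagrange functional
$$\Phi(u)=\frac{1}{p}\int_\Omega A(|\nabla u|^p)-\int_\Omega F(u)$$
on $X=W_0^{1,p}(\Omega)$ with norm $\|u\|=(\int_\Omega|\nabla u|^p)^{1/p}$, noting that $X$ is reflexive, the norm is $\mathcal{C}^1$ on $X\setminus\{0\}$, and hypotheses $(1), (4), (6)$ of the corollary make $\Phi\in\mathcal{C}^1(X,\Re)$ with $\Phi(0)=0$. The functional is even, so once a ground state $u_0$ is produced, $|u_0|$ yields the desired non-negative ground state (using $|\nabla|u_0||=|\nabla u_0|$ a.e.).

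Conditions $(1), (2), (3)$ of Theorem \ref{tp} hold with $r=p$. From $(1)$, $a\geq k_0$, so $A(s)\geq k_0 s$ and $\Phi(u)\geq (k_0/p)\|u\|^p-I(u)$ with $I(u)=\int_\Omega F(u)$ weakly continuous by subcritical growth and Rellich--Kondrachov; this is $(2)$. Combining $|f(t)|\leq \varepsilon|t|^{q-1}+C_\varepsilon|t|^{\alpha-1}$ (from $(4), (6)$) with the sharper bound $\int_\Omega a(|\nabla u|^p)|\nabla u|^p\geq k_0(\|u\|^p+\|\nabla u\|_q^q)$ and Poincar\'e's inequality in $W_0^{1,q}(\Omega)$ yields $\Phi'(u)u\geq c\|u\|^p-C\|u\|^\alpha$ for small $\|u\|$, proving $(1)$. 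For $(3)$, the upper bound $A(s)\leq k_1(s+(p/q)s^{q/p})$ forces $t^{-p}\tfrac{1}{p}\int_\Omega A(t^p|\nabla u|^p)$ to stay bounded as $t\to\infty$ uniformly on norm-bounded sets; a weakly compact $K\subset X\setminus\{0\}$ is bounded away from $0$ in $X$, so for $t_n\to\infty$ and $u_n\in K$ with $u_n\rightharpoonup u_0\neq 0$ and $u_n\to u_0$ a.e. (Rellich--Kondrachov), the superlinearity $(5)$ together with Fatou's lemma forces $\int_\Omega F(t_n u_n)/t_n^p\to\infty$.

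For the monotonicity condition $(4)$, I decompose
$$\frac{\Phi'(tu)u}{t^{p-1}}=\int_\Omega a(t^p|\nabla u|^p)|\nabla u|^p-\int_\Omega |u|^p\,\frac{f(t|u|)}{(t|u|)^{p-1}};$$
the first integrand is non-increasing in $t$ by $(2)$ and the second strictly increasing on $\{u\neq 0\}$ by $(7)$, so the map is strictly decreasing. Similarly,
$$\Phi(tu)-\tfrac{1}{p}\Phi'(tu)tu=\tfrac{1}{p}\int_\Omega H(t^p|\nabla u|^p)-\int_\Omega\bigl(F(tu)-\tfrac{1}{p}tu\,f(tu)\bigr),$$
with $H(\sigma)=A(\sigma)-\sigma a(\sigma)$ satisfying $H'(\sigma)=-\sigma a'(\sigma)\geq 0$ by $(2)$, and $\sigma\mapsto pF(\sigma)-\sigma f(\sigma)$ strictly decreasing on $(0,\infty)$ by $(7)$, so this map is strictly increasing in $t>0$.

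The hard part is the weak lower semicontinuity condition $(5)$: after subtracting the weakly continuous $f$-terms, the problem reduces to weak lower semicontinuity on $W_0^{1,p}(\Omega)$ of $u\mapsto\int_\Omega b(|\nabla u|)$ and $u\mapsto\int_\Omega h(|\nabla u|)$, where $b(s)=a(s^p)s^p$ and $h(s)=A(s^p)-s^p a(s^p)$. Hypothesis $(3)$ of the corollary states precisely that $b$ and $h$ are convex on $[0,\infty)$; since $b(0)=h(0)=0$ and both are non-negative (for $h$, because $h'(s)=-ps^{2p-1}a'(s^p)\geq 0$), convexity forces $b$ and $h$ to be non-decreasing on $[0,\infty)$. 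Hence $\xi\mapsto b(|\xi|)$ and $\xi\mapsto h(|\xi|)$ are convex on $\Re^N$, and the classical weak lower semicontinuity theorem for non-negative convex integrands applies. With all hypotheses of Theorem \ref{tp} verified, a nontrivial ground state exists, and its absolute value gives the desired non-negative ground state of \eqref{quasi}.
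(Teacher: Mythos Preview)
Your proof is correct and follows essentially the same approach as the paper: you set $X=W_0^{1,p}(\Omega)$, verify hypotheses (1)--(5) of Theorem~\ref{tp} with $r=p$, and conclude via the evenness of $\Phi$. Your treatment of the weak lower semicontinuity in condition~(5) is actually more detailed than the paper's (which simply asserts that assumption~(3) of the corollary yields it); your observation that $b(0)=h(0)=0$, $b,h\geq 0$, and convexity together force $b,h$ to be non-decreasing---hence $\xi\mapsto b(|\xi|)$, $\xi\mapsto h(|\xi|)$ convex on $\Re^N$---is exactly the mechanism behind that one-line claim.
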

 
\begin{proof}
First of all, note that $A(t)$ is well-defined in view of (1).
Let $X=W_0^{1,p}(\Omega)$  with $\|u\|=\left(\int_\Omega |\nabla u|^p\right)^{\frac{1}{p}}$. We set, for $u \in X$,
\begin{equation}
\label{ph1}
\Phi(u)=\frac{1}{p}\displaystyle\int_{\Omega}A(|\nabla u|^p)-\int_\Omega F(u).
\end{equation}
From (1) we infer that 
\begin{equation}
\label{ina}
k_0\left(t^p+ t^q\right)\leq a(t^p)t^p \leq k_1 \left(t^p+ t^q\right),  \quad \forall t>0
\end{equation}
and
\begin{equation}
\label{inag}
k_0\left(t^p+ \frac{p}{q} t^q\right)\leq A(t^p)\leq k_1 \left(t^p+ \frac{p}{q} t^q\right), \quad \forall t>0.
\end{equation}
On the other hand, from (4) and (6) we infer that for any $\varepsilon>0$ there exists $C_{\varepsilon}>0$ such that 
\begin{equation}
\label{ef}
|f(t)| \leq \varepsilon |t|^{q-1} + C_\varepsilon |t|^{\alpha-1}, \quad \forall t \in \Re.
\end{equation}
Since $1<q\leq p$, it follows that $\Phi$ is a $\mathcal{C}^1$ functional on $X$.
From \eqref{ef} and the continuity of the embeddings $ X \subset L^{\alpha}(\Omega)$ and $W_0^{1,q}(\Omega) \subset L^q(\Omega)$, we infer that for any $\varepsilon>0$ there exists $C_{\varepsilon}>0$ such that 
\begin{equation*}
\left|\int_\Omega f(u)u \right|\leq \varepsilon \int_\Omega |\nabla u|^q +C_\varepsilon \|u\|^{\alpha} \quad \forall u \in X.
\end{equation*}
Taking $\varepsilon>0$ sufficiently small and using \eqref{ina}, we get
$$\Phi'(u)u \geq (k_0-\varepsilon) \int_\Omega |\nabla u|^q +k_0\|u\|^p - C_\varepsilon \|u\|^{\alpha} \quad \forall u \in X,$$
and consequently
$$\liminf_{\|u\| \to 0} \frac{\Phi'(u)u}{\|u\|^p}>0.$$
From \eqref{inag}, note also that $$\Phi(u)\geq \frac{k_0}{p}\|u\|^p -\int_\Omega F(u)$$
and by the compact Sobolev embedding $X \subset L^{\alpha}(\Omega)$, the  functional $u \mapsto \int_\Omega F(u)$ is weakly continuous on $X$. Still from \eqref{inag}, we have
$$\frac{\Phi(tu)}{t^p}\leq \frac{k_1}{q}t^{q-p} \int_\Omega |\nabla u|^q+\frac{k_1}{p}\|u\|^p-\int_\Omega \frac{F(tu)}{t^p} \rightarrow -\infty,$$
uniformly for $u$ on weakly compact subsets of $X \setminus \{0\}$, by (5).
From (2) and (7) we have that
$$t \mapsto \frac{\Phi'(tu)u}{t^{p-1}}=\int_\Omega a\left(t^p|\nabla u|^p\right) |\nabla u|^p-\int_\Omega \frac{f(tu)}{t^{p-1}}u$$ is decreasing on $(0,\infty)$ for every $u \neq 0$.
Furthermore, it is clear that
$$
t \mapsto A(t^p)-a(t^p)t^p \ \ \mbox{is
non-decreasing in } (0,\infty).
$$
On the other hand, (7) provides that $$t \mapsto \frac{1}{p}f(t)t-F(t)  \ \ \mbox{is
increasing in } (0,\infty).$$
Thus $t \mapsto \Phi(tu)-\frac{1}{p}\Phi'(tu)tu$ is increasing in $(0,\infty)$ for every $u \neq 0$.

Finally, (3) yields that $u \mapsto \Phi'(u)u$ and $u \mapsto \Phi(u)-\frac{1}{p}\Phi'(u)u$ are weakly lower semi-continuous on $X$.
Therefore Theorem \ref{tp} applies with $r=p$ and since $\Phi$ is even, we infer  that $\Phi$ has a nontrivial and non-negative ground state.
\end{proof}

Let $\Psi:X \rightarrow \Re$ be a $\mathcal{C}^1$ functional.
Recall that $\Psi'$ belongs to the class $(S_+)$ condition if 
$$u_n \rightharpoonup u_0 \text{ in } X, \quad \limsup \Phi'(u_n)(u_n-u_0) \leq 0 \quad \Longrightarrow \quad u_n \rightarrow u_0 \text{ in } X.$$
Set $\Psi(u)=\int_\Omega A(|\nabla u|^p)$ for $u \in W_0^{1,p}(\Omega)$. It is known that if $t \mapsto A(t^p)$ is strictly convex and satisfies
$$ a_1t^p-b_1 \leq A(t^p)\leq a_2t^p+b_2, \quad \forall t>0$$ for some positive constants $a_1,a_2,b_1,b_2$, then $\Psi'$ belongs to the class $(S_+)$ (see \cite{DU} for a proof).

As a consequence of Remark \ref{r1}-(4), we see that under the assumptions of Corollary \ref{c1}  and the condition \begin{equation}
\label{conv} t \mapsto A(t^p) \ \text{is strictly convex in} \ (0,\infty) 
 \end{equation}
the functional $\Phi$ given by \eqref{ph1} satisfies the Palais-Smale condition on $\mathcal{N}$. We infer then the following result:
 
\begin{cor}\label{c11}
Under the assumptions of Corollary \ref{c1}, assume in addition that \eqref{conv} holds. Then the problem \eqref{quasi} has infinitely many pairs of solutions.
\end{cor}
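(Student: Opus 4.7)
The plan is to apply Corollary \ref{cp}. Since $f$ is odd, $F$ is even and hence $\Phi$ is even. All hypotheses of Theorem \ref{tp} have already been checked during the proof of Corollary \ref{c1} (with $r=p$), so the only thing that remains is to verify that $\Phi$ satisfies the Palais--Smale condition on $\mathcal{N}$.

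To do this, I would take a sequence $(u_n)\subset \mathcal{N}$ with $\Phi(u_n)$ bounded and $\Phi'(u_n)\to 0$ in $X^*$, where $X=W_0^{1,p}(\Omega)$. Remark \ref{r1}-(4) immediately gives boundedness of $(u_n)$, so passing to a subsequence we may assume $u_n \rightharpoonup u_0$ in $X$. Setting $\Psi(u)=\int_\Omega A(|\nabla u|^p)$, I would then write
$$\tfrac{1}{p}\Psi'(u_n)(u_n-u_0)=\Phi'(u_n)(u_n-u_0)+\int_\Omega f(u_n)(u_n-u_0).$$
The first term on the right tends to zero because $\Phi'(u_n)\to 0$ and $(u_n-u_0)$ is bounded. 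For the second term, the subcritical growth estimate \eqref{ef} together with the compact embedding $X\hookrightarrow L^\alpha(\Omega)$ for $\alpha\in(p,p^*)$ yields $\int_\Omega f(u_n)(u_n-u_0)\to 0$ by a standard Nemytskii argument. Thus $\Psi'(u_n)(u_n-u_0)\to 0$.

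At this point I would invoke the $(S_+)$ property of $\Psi'$, guaranteed by \eqref{conv} and the result from \cite{DU} cited in the paper, to conclude $u_n\to u_0$ strongly in $X$. The one technical point to check is the growth bound required in \cite{DU}, namely $a_1 t^p-b_1 \leq A(t^p)\leq a_2 t^p+b_2$; this follows from \eqref{inag} and the assumption $p\geq q$, since on $[0,1]$ both sides are bounded and for $t\geq 1$ one has $t^q\leq t^p$. Once strong convergence is established, the Palais--Smale condition on $\mathcal{N}$ holds and Corollary \ref{cp} yields infinitely many pairs of solutions for \eqref{quasi}. The only real obstacle is the verification of the growth condition for $(S_+)$; everything else reduces to assembling the ingredients already prepared in the proofs of Theorem \ref{tp}, Remark \ref{r1}, and Corollary \ref{c1}.
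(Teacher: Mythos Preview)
Your proposal is correct and follows essentially the same route as the paper: the paper's argument is simply the paragraph preceding the corollary, where it observes that \eqref{conv} together with the growth bounds derived from \eqref{inag} puts $\Psi'$ in class $(S_+)$ via \cite{DU}, and then invokes Remark \ref{r1}-(4) for boundedness of Palais--Smale sequences in $\mathcal{N}$, so that Corollary \ref{cp} applies. You have in fact spelled out more of the intermediate steps (the computation showing $\Psi'(u_n)(u_n-u_0)\to 0$ and the verification of the two-sided growth bound on $A(t^p)$) than the paper does.
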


\begin{rmk}
One may easily check that Corollaries \ref{c1} and \ref{c11} apply in particular to $a(t) \equiv 1$ and $a(t)=1+t^{\frac{q-p}{p}}$, with $p>q>1$, which correspond to the operators $-\Delta_p$ and $-\Delta_p-\Delta_q$, respectively.
\end{rmk}

\medskip

\subsection{A nonlocal equation}
Let $N=3$, so that $2^*=6$. We assume that $M:[0,\infty)\rightarrow
[0,\infty)$ is a $\mathcal{C}^{1}$ function and we set $\hat{M}(t)=\displaystyle\int_{0}^{t}M(s)\ ds$ for $t \in \Re$.

\begin{cor}\label{c2}
Under the above assumptions on $M$, assume in addition:
\begin{enumerate}
\item $M$ is increasing and $M(0):=m_0>0$.
\item $t\mapsto\displaystyle\frac{M(t)}{t}$ is decreasing.
\item $\dis \lim_{t \rightarrow 0} \frac{f(t)}{t}=0$.
\item $\dis \lim_{t \rightarrow \infty} \frac{F(t)}{t^4}=\infty $.
\item $\dis \lim_{t \rightarrow \infty} \frac{f(t)}{t^{\alpha-1}}=0$ for some  $\alpha \in (4,6)$.
\item $t\mapsto  \frac{f(t)}{t^{3}}$ is increasing.
\end{enumerate}
Then \eqref{kir} has a nontrivial and non-negative ground state.
\end{cor}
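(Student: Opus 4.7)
The plan is to apply Theorem \ref{tp} to the $\mathcal{C}^1$ functional
$$\Phi(u)=\tfrac{1}{2}\hat M(\|u\|^2)-\int_\Omega F(u),\qquad u\in X=H_0^1(\Omega),\ \ \|u\|^2=\int_\Omega|\nabla u|^2,$$
with the choice $p=4$ and $r=2$. The value $p=4$ is dictated by (4) and (6) on $f$ combined with the at-most-quadratic growth of $\hat M$ forced by (2), while $r=2$ comes from $M(0)=m_0>0$. Since $N=3$ gives $2^*=6>\alpha$, conditions (3) and (5) on $f$ together with Rellich's theorem make $\Phi\in\mathcal{C}^1(X)$ and the map $I(u):=\int_\Omega F(u)$ weakly continuous.

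The verification of (1)--(3) of Theorem \ref{tp} should be straightforward. First, $M(0)=m_0>0$ and $|f(t)|\le \varepsilon|t|+C_\varepsilon|t|^{\alpha-1}$ (from (3) and (5) on $f$) yield $\Phi'(u)u\ge (m_0-\varepsilon)\|u\|^2-C_\varepsilon\|u\|^\alpha$, giving (1). The inequality $\hat M(s)\ge m_0 s$ gives $\Phi(u)\ge \tfrac{m_0}{2}\|u\|^2-I(u)$, which is (2). For (3), the bound $M(s)\le M(1)s$ for $s\ge 1$ derived from (2) gives $\hat M(s)\le C(1+s^2)$, so $\hat M(t^2\|u\|^2)/t^4$ stays bounded as $t\to\infty$, while (4) forces $\int_\Omega F(tu)/t^4\to\infty$ uniformly on weakly compact subsets of $X\setminus\{0\}$ (such subsets are compact in $L^4$ with $\|\cdot\|_{L^4}$ bounded away from $0$ on them).

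The bulk of the work will be to verify the monotonicity (4) and the weak lower semicontinuity (5). For (4), write
$$\frac{\Phi'(tu)u}{t^3}=\frac{M(t^2\|u\|^2)\,\|u\|^2}{t^2}-\int_\Omega \frac{f(tu)}{t^3}u;$$
the first term decreases in $t$ because $s\mapsto M(s)/s$ is decreasing by (2), and the integral increases in $t$ because the oddness of $f$ gives $f(tu)u/t^3=(f(t|u|)/(t|u|)^3)|u|^4$ with $\tau\mapsto f(\tau)/\tau^3$ increasing by (6). For $t\mapsto \Phi(tu)-\tfrac{1}{4}\Phi'(tu)tu$, the nonlocal contribution equals $g(t^2\|u\|^2)$ with $g(s):=\tfrac{1}{2}\hat M(s)-\tfrac{1}{4}M(s)s$ and $g'(s)=\tfrac{1}{4}(M(s)-M'(s)s)\ge 0$ (consequence of $(M(s)/s)'\le 0$), while the nonlinear contribution $\int_\Omega h(tu)$ with $h(\tau):=\tfrac{1}{4}f(\tau)\tau-F(\tau)$ is increasing in $t$ since $h'(\tau)=\tfrac{\tau}{4}(f'(\tau)-3f(\tau)/\tau)\ge 0$ by (6). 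Both contributions therefore increase in $t$.

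For (5), $s\mapsto M(s)s$ is continuous and non-decreasing (product of non-negative non-decreasing functions by (1)) and so is $g$, hence $u\mapsto M(\|u\|^2)\|u\|^2$ and $u\mapsto g(\|u\|^2)$ are weakly lower semicontinuous via $\liminf\|u_n\|^2\ge\|u\|^2$; the nonlinear integrals $\int_\Omega f(u)u$ and $\int_\Omega h(u)$ pass to the limit strongly by compact embedding. The main obstacle is really this careful bookkeeping, extracting each Nehari-type ingredient from the purely scalar monotonicities on $M$ and $f$ alone, without any $(S_+)$ or Palais--Smale input. Finally, since $F$ is even and $\hat M(\|u\|^2)=\hat M(\|-u\|^2)$, $\Phi$ is even, and Theorem \ref{tp} supplies a nontrivial non-negative ground state.
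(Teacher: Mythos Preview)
Your proposal is correct and follows essentially the same route as the paper's proof: you apply Theorem~\ref{tp} with $p=4$, $r=2$, verifying each of its hypotheses from the scalar monotonicity assumptions on $M$ and $f$ in exactly the way the paper does. The only minor deviation is that you bound $\hat M(s)\ge m_0 s$ directly from $M\ge m_0$, whereas the paper takes the slightly longer route $\hat M(t)\ge \tfrac12 M(t)t$ via assumption~(2); both yield the same lower bound $\Phi(u)\ge \tfrac{m_0}{2}\|u\|^2-\int_\Omega F(u)$.
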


\begin{proof}
Let $X=H_0^1(\Omega)$ with $\|u\|=\left(\int_\Omega |\nabla u|^2\right)^{\frac{1}{2}}$. We set, for $u \in X$,
\begin{equation}
\label{phk}
\Phi(u)=\frac{1}{2}\hat{M}\left(\|u\|^2\right)-\int_\Omega F(u).
\end{equation}
Note that $\| \cdot \|$ is a $\mathcal{C}^1$ functional on $X$, so that $u \mapsto \hat{M}(\|u\|^2)$ is $\mathcal{C}^1$ as well.
From (3) and (5) we have that for every $\varepsilon>0$ there exists $C_{\varepsilon}>0$ such that 
\begin{equation}
\label{ef2}
|f(t)|\leq \varepsilon |t| +C_\varepsilon |t|^{\alpha-1}, \quad \forall t \in \Re.
\end{equation}
Thus $\Phi$ is a $\mathcal{C}^1$ functional.
Using (1) and \eqref{ef2}, we have $$\Phi'(u)u=M(\|u\|^2)\|u\|^2 - \int_\Omega f(u)u \geq m_0\|u\|^2- \int_\Omega f(u)u.$$
From \eqref{ef2} and the continuous embedding $X \subset L^\alpha(\Omega)$, 
we get
\begin{equation*}
\left|\int_\Omega f(u)u \right|\leq \varepsilon \|u\|^2 +C_\varepsilon \|u\|^\alpha, \quad \forall u \in X.
\end{equation*}
Thus we have, for $u \in X$,
$$\Phi'(u)u\geq m_0\|u\|^2-\varepsilon \|u\|^2 -C_\varepsilon \|u\|^\alpha.$$ 
Taking $\varepsilon>0$ sufficiently small, we get
$$\liminf_{\|u\| \to 0} \frac{\Phi'(u)u}{\|u\|^2}>0.$$
Moreover, from (2) we infer that
$$\hat{M}(t)=\int_0^t \frac{M(s)}{s} s\ ds\geq \frac{M(t)}{t}\int_0^t s\ ds=\frac{1}{2}M(t)t, \quad \forall t>0,$$
and consequently
$$ \Phi(u)\geq \frac{m_0}{2}\|u\|^2 -\int_\Omega F(u).$$
By \eqref{ef2} and the compact  embedding $X \subset L^\alpha(\Omega)$, the  functional $u \mapsto \int_\Omega F(u)$ is weakly continuous on $X$.
Now, from (2) we have $M(t)\leq M(1)t$ for $t\geq 1$, so that
$$M(t)\leq M(1)t+C, \quad \forall t\geq 0$$
for some constant $C>0$. Consequently we have, for $u \in X$,
$$\Phi(u)\leq C_1\|u\|^4+C_2\|u\|^2-\int_\Omega F(u),$$
for some $C_1,C_2>0$, so that, by (4), $\frac{\Phi(tu)}{t^4} \rightarrow -\infty$ uniformly on weakly compact subsets of $X \setminus \{0\}$. From (2) and (6), it follows that
$$t \mapsto \frac{\Phi'(tu)u}{t^3}=\frac{1}{t^2}M(t^2\|u\|^2)\|u\|^2-\int_\Omega \frac{f(tu)}{t^3}u$$ is decreasing for every $u \neq 0$. 

Furthermore, note that (2) yields $tM'(t)\leq M(t)$ for any $t>0$, and consequently
$$t \mapsto \frac{1}{2}\hat{M}(t)-\frac{1}{4}M(t)t \quad \text{ is increasing}.$$
Hence $t \mapsto \Phi(tu)-\frac{1}{4}\Phi'(tu)tu$ is increasing in $(0,\infty)$ for every $u \neq 0$.

Finally, since $M$ and  $t \mapsto \frac{1}{2}\hat{M}(t)-\frac{1}{4}M(t)t$ are increasing, the mappings $$u \mapsto M(\|u\|^2) \quad \text{and} \quad u \mapsto \frac{1}{2}\hat{M}(\|u\|^2)-\frac{1}{4}M(\|u\|^2)\|u\|^2$$
are weakly lower semicontinuous on $X$.
Therefore Theorem \ref{tp} applies with $r=2$ and $p=4$. Note also that $\Phi$ is even. The proof is now complete.
\end{proof}

\begin{cor}\label{c22}
Under the assumptions of Corollary \ref{c2}, the problem \eqref{kir} has infinitely many pairs of solutions.
\end{cor}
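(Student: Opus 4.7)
The strategy is to invoke Corollary~\ref{cp}. All hypotheses of Theorem~\ref{tp} have already been verified in the proof of Corollary~\ref{c2}, and the functional $\Phi$ given by \eqref{phk} is even because $F$ is even. It therefore remains only to establish that $\Phi$ satisfies the Palais--Smale condition on $\mathcal{N}$.

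To this end, I would take a sequence $(u_n) \subset \mathcal{N}$ with $\Phi(u_n)$ bounded and $\Phi'(u_n) \to 0$ in $X'$, and prove that it contains a subsequence converging strongly in $X = H_0^1(\Omega)$. First, Remark~\ref{r1}-(4) guarantees that $(u_n)$ is bounded; hence, up to a subsequence, $u_n \rightharpoonup u_0$ in $X$, $u_n \to u_0$ in $L^s(\Omega)$ for every $s \in [1,6)$, and $M(\|u_n\|^2) \to \mu$ for some $\mu \geq 0$. Because $M$ is increasing with $M(0)=m_0>0$, in fact $\mu \geq m_0 > 0$. Using \eqref{ef2} together with H\"older's inequality and the compact Sobolev embeddings, one checks that $\int_\Omega f(u_n)(u_n - u_0) \to 0$. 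Since $(u_n-u_0)$ is bounded and $\Phi'(u_n) \to 0$ in $X'$, we also have $\Phi'(u_n)(u_n - u_0) \to 0$, so that
$$M(\|u_n\|^2)\int_\Omega \nabla u_n\cdot\nabla(u_n-u_0)\,dx \to 0.$$
Dividing by $M(\|u_n\|^2)$, which stays at or above $m_0$, yields $\int_\Omega \nabla u_n\cdot\nabla(u_n-u_0)\,dx \to 0$. Combined with $\int_\Omega \nabla u_0 \cdot \nabla(u_n - u_0)\,dx \to 0$ (a consequence of weak convergence), this gives $\|u_n - u_0\|^2 \to 0$, which is the desired strong convergence.

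The only delicate point is the presence of the nonlocal coefficient $M(\|u_n\|^2)$, which a priori could degenerate and spoil the usual $(S_+)$ argument for $-\Delta$. This is precisely ruled out by the nondegeneracy $M(0)=m_0>0$ combined with the monotonicity of $M$, which uniformly bounds $M(\|u_n\|^2)$ below by $m_0$ along \emph{any} sequence. Once the Palais--Smale condition on $\mathcal{N}$ is secured in this way, Corollary~\ref{cp} applies verbatim and produces infinitely many pairs of critical points of $\Phi$, hence infinitely many pairs of solutions of \eqref{kir}.
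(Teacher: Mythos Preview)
Your proof is correct and follows essentially the same route as the paper's: boundedness via Remark~\ref{r1}-(4), the growth estimate \eqref{ef2} to dispose of the nonlinear term, and the lower bound $M\geq m_0$ to deduce $\int_\Omega \nabla u_n\cdot\nabla(u_n-u_0)\to 0$. The only cosmetic difference is that the paper concludes by invoking the uniform convexity of $H_0^1(\Omega)$, whereas you write out the Hilbert-space identity $\|u_n-u_0\|^2\to 0$ directly; these are equivalent here.
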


\begin{proof} Let $(u_n) \subset \mathcal{N}$ be a Palais-Smale sequence for $\Phi$, defined in \eqref{phk}. By Remark \ref{r1}-(4), we know that $(u_n)$ is bounded, so that, up to a subsequence, $u_n \rightharpoonup u_0$ in $X$. From \eqref{ef2}, we know that
$$\int_\Omega f(u_n)(u_n-u_0)=o(1).$$ 
Hence
$$M(\|u_n\|^2) \int_\Omega \nabla u_n \nabla (u_n-u_0)=\Phi'(u_n)(u_n-u_0)+o(1)=o(1).$$
Since $M$ is continuous and $M(t)\geq m_0>0$ for all $t\geq 0$, we infer that $M(\|u_n\|^2)$ is bounded and bounded away from zero, so that $$\int_\Omega \nabla u_n \nabla (u_n-u_0)=o(1).$$  By the uniform convexity of $H_0^1(\Omega)$, we get $u_n \rightarrow u_0$.
\end{proof}

\begin{rmk} Besides $M(t)=at+b$, with $a,b>0$,
Corollaries \ref{c2} and \ref{c22} apply also to $M(t) = m_{0} + \ln(1 + t)$ and $
M(t)=m_{0}+\displaystyle\sum_{i=1}^{k}b_{i}t^{\gamma_{i}}
$, where $b_{i}\geq 0$ and $\gamma_{i}\in (0,1]$ for $i=1,...,k$, with $b_i>0$ for at least one $i$.
\end{rmk}
\medskip

\subsection{A anisotropic equation}
Let $1<p_1\leq p_2 \leq ...\leq p_N$ be such that $\sum_{i=1}^N \frac{1}{p_i}>1$ and $p_N < p^*$, where $p^*=\frac{N}{\left(\sum_{i=1}^N \frac{1}{p_i}\right)-1}$. If $r>1$, we denote by $\|v\|_r$ the norm of $v$ in $L^r(\Omega)$.

\begin{cor}\label{c3}
Under the above assumptions, assume in addition:
\begin{enumerate}
\item $\lim_{t \to 0^+} \frac{f(t)}{t^{p_1-1}}=0$.
\item $\lim_{t \to \infty} \frac{F(t)}{t^{p_N}}=\infty$.
\item $\lim_{t \to \infty} \frac{f(t)}{t^{\alpha-1}}=\infty$ for some $\alpha \in (p_N,p^*)$.
\item $t \mapsto \frac{f(t)}{t^{p_N-1}}$ is increasing.
\end{enumerate}
Then \eqref{anis} has a nontrivial and non-negative ground state.
\end{cor}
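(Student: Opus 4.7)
The plan is to apply Theorem \ref{tp} --- in the form of Remark \ref{r1}-(1), which is tailored for non-homogeneous $I_0$ --- to
\[
\Phi(u)=\sum_{i=1}^N\frac{1}{p_i}\int_\Omega|\partial_i u|^{p_i} - \int_\Omega F(u), \qquad u\in X=\mathcal{D}_0^{1,\overrightarrow{p}}(\Omega),
\]
with Nehari exponent $p=p_N$. The space $X$ is reflexive, $\|\cdot\|$ is $\mathcal{C}^1$ off the origin, and Troisi's anisotropic embedding gives $X\hookrightarrow L^\alpha(\Omega)$ compactly for the exponent $\alpha<p^*$ supplied by (3), so that the Nemytskii terms $u\mapsto\int_\Omega F(u)$ and $u\mapsto\int_\Omega f(u)u$ are weakly continuous and $\Phi\in\mathcal{C}^1(X,\Re)$. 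Taking $I_0(u)=\sum_i\frac{1}{p_i}\int|\partial_i u|^{p_i}$, together with the auxiliary decomposition $J_0(u)=\sum_i\|\partial_i u\|_{p_i}^{p_i}$, $J(u)=\int_\Omega f(u)u$, realises hypotheses (A)--(B) of Remark \ref{r1}-(1): any $w\in\mathcal{S}$ has some $\|\partial_i w\|_{p_i}\ge 1/N$, whence $I_0(tw)\ge\frac{1}{p_N}(t/N)^{p_1}\to\infty$ uniformly on $\mathcal{S}$; and $J_0(u_n)\to 0$ iff $\|\partial_i u_n\|_{p_i}\to 0$ for each $i$ iff $\|u_n\|\to 0$.

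For hypothesis (1), I combine (1) of the corollary with the continuous embeddings into $L^{p_1}$ and $L^\alpha$ to obtain $\Phi'(u)u\ge C\|u\|^{p_N}-o(\|u\|^{p_N})$ on a small ball around the origin, yielding $\liminf_{u\to 0}\Phi'(u)u/\|u\|^{p_N}>0$. For hypothesis (3), condition (3) of the corollary forces $F(s)/s^{p_N}\to\infty$ as $s\to\infty$ (since $\alpha>p_N$); combined with the bound $I_0(tu)/t^{p_N}=\sum_i t^{p_i-p_N}\int|\partial_i u|^{p_i}/p_i$ remaining uniformly bounded from above on weakly compact $u$-sets for $t\ge 1$ (each $p_i-p_N\le 0$), Fatou applied off $\{u=0\}$ delivers $\Phi(tu)/t^{p_N}\to-\infty$ uniformly there.

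Hypothesis (4) is the central monotonicity check. Writing
\[
\frac{\Phi'(tu)u}{t^{p_N-1}} = \sum_{i=1}^N t^{p_i-p_N}\int_\Omega|\partial_i u|^{p_i} - \int_\Omega\frac{f(tu)}{t^{p_N-1}}u,
\]
each prefactor $t^{p_i-p_N}$ with $p_i\le p_N$ is non-increasing in $t>0$, and (4) of the corollary makes the second integrand pointwise increasing, so the entire expression is strictly decreasing. For the map $t\mapsto\Phi(tu)-\frac{1}{p_N}\Phi'(tu)tu$, the $I_0$-part reduces to $\sum_i(\frac{1}{p_i}-\frac{1}{p_N})t^{p_i}\int|\partial_i u|^{p_i}$, non-negative and non-decreasing (since $p_i\le p_N$), while the $F$-part $\int(\frac{1}{p_N}f(tu)tu-F(tu))$ is increasing by (4). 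Hypothesis (5) then follows from convexity of $s\mapsto|s|^{p_i}$ (weak lower semicontinuity of the $I_0$-contributions) combined with the weak continuity of the $F$-contributions, and evenness of $\Phi$ produces $u_0\ge 0$.

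The main obstacle is the genuine non-homogeneity of $I_0$: near the origin it scales like $\|u\|^{p_N}$, but at infinity only like $\|u\|^{p_1}$, so the uniform coercivity $I_0(u)\ge C\|u\|^r$ demanded by hypothesis (2) of Theorem \ref{tp} fails for any single choice of $r$. This is exactly the situation Remark \ref{r1}-(1) was introduced to handle; once (A)--(B) are verified as in the opening paragraph, the remaining arguments proceed as in Corollaries \ref{c1} and \ref{c2}.
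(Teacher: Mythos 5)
Your proposal is correct and follows essentially the same route as the paper: the same decomposition $I_0,I,J_0,J$, the same invocation of Remark \ref{r1}-(1) to bypass the failure of hypothesis (2) of Theorem \ref{tp} for a non-homogeneous $I_0$, and the same verifications of hypotheses (1), (3)--(5) via the inequality $\sum_i\|\partial_i u\|_{p_i}^{p_i}\geq C\|u\|^{p_N}$ on the unit ball and the monotonicity of $t\mapsto f(t)/t^{p_N-1}$. The only slips are cosmetic (the uniform bound on $\mathcal{S}$ should carry the constant $N^{-p_N}$ rather than $N^{-p_1}$, and the subcritical growth condition (3) is what yields weak continuity of the Nemytskii terms while condition (2) drives $\Phi(tu)/t^{p_N}\to-\infty$), neither of which affects the argument.
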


\begin{proof}
Let
 $X=\mathcal{D}_0^{1,\overrightarrow{p}}(\Omega)$ be the completion of $\mathcal{C}_0^{\infty}(\Omega)$ with respect to the norm $\|u\|=\displaystyle \sum_{i=1}^N \|\partial_i u\|_{p_i}$. It is known that $X$ is a reflexive Banach space which embedds continuously in $L^q(\Omega)$  if $q \in [1,p^*]$, and compactly if $q\in [1,p^*)$, cf \cite{FGK}. 
 
We set, for $u \in X$,
\begin{equation}
\label{phan}
\Phi(u)=\sum_{i=1}^N \frac{1}{p_i}\int_\Omega |\partial_i u|^{p_i}-\int_\Omega F(u).
\end{equation}
Note that if $\|u\|\leq 1$ then $\|\partial_i u\|_{p_i}<1$ for $i=1,..,N$, and since $p_i\leq p_N$ we get $\|\partial_i u\|_{p_i}^{p_i} \geq \|\partial_i u\|_{p_i}^{p_N}$ for $i=1,..,N$. Thus
$$\sum_{i=1}^N \|\partial_i u\|_{p_i}^{p_i} \geq \sum_{i=1}^N \|\partial_i u\|_{p_i}^{p_N} \geq C \left(\sum_{i=1}^N \|\partial_i u\|_{p_i}\right)^{p_N} = C\|u\|^{p_N}.$$
Using (1), (3) and the continuous embedding $X \subset L^{\alpha}(\Omega)$, we have that for any $\varepsilon >0$ there exists $C_\varepsilon>0$ such that 
\begin{equation*}
\left|\int_\Omega f(u)u \right|\leq \varepsilon \|\partial _1 u\|_{p_1}^{p_1} +C_\varepsilon \|u\|^\alpha, \quad \forall u \in X.
\end{equation*}
Hence
\begin{eqnarray*}
\Phi'(u)u&=&\sum_{i=1}^N \|\partial_i u\|_{p_i}^{p_i} - \int_\Omega f(u)u \geq \sum_{i=1}^N \|\partial_i u\|_{p_i}^{p_i}  -\varepsilon \|\partial _1 u\|_{p_1}^{p_1}-C_\varepsilon \|u\|^\alpha\\
&\geq & (1-\varepsilon)\sum_{i=1}^N \|\partial_i u\|_{p_i}^{p_i}  -C_\varepsilon \|u\|^\alpha \geq C(1-\varepsilon)\|u\|^{p_N} - C_\varepsilon \|u\|^\alpha.
\end{eqnarray*}
Taking $\varepsilon<1$ we get $$\liminf_{u \to 0} \frac{\Phi'(u)u}{\|u\|^{p_N}}>0.$$
Note also that $$\Phi(u) =I_0(u)-I(u)\quad \text{and} \quad \Phi'(u)u=J_0(u)-J(u),$$
where $$I_0(u)=\sum_{i=1}^N \frac{1}{p_i}\|\partial_i u\|_{p_i}^{p_i},\quad  I(u)=\int_\Omega F(u),$$
$$J_0(u)=\sum_{i=1}^N \|\partial_i u\|_{p_i}^{p_i}\quad \text{and} \quad J(u)=\int_\Omega f(u)u.$$
From the compact embedding $X \subset L^{\alpha}(\Omega)$, it follows that $I$ and $J$ are weakly continuous. 
Moreover there exists $C>0$ such that $$I_0(tu)\geq \frac{t^{p_1}}{p_N} \sum_{i=1}^N \|\partial_i u\|_{p_i}^{p_i}  \geq C \frac{t^{p_1}}{p_N} \|u\|^{p_N}$$
if $t>1$ and $\|u\| \leq 1$. In particular, if $u \in \mathcal{S}$ then $I_0(tu)\to \infty$ as $t \to \infty$. 
In addition, it is clear that $J_0(u_n) \to 0$ if and only if $u_n \to 0$ in $X$. From (2) we have
$$\frac{\Phi(tu)}{t^{p_N}}= \sum_{i=1}^N \frac{t^{p_i-p_N}}{p_i}\|\partial_i u\|_{p_i}^{p_i} - \int_\Omega \frac{F(tu)}{t^{p_N}} \to -\infty,$$
whereas, from (4), it follows that 
$$t \mapsto t^{1-p_N}\Phi'(tu)u= \sum_{i=1}^N t^{p_i-p_N} \|\partial_i u\|_{p_i}^{p_i} - \int_\Omega \frac{f(tu)}{t^{p_N-1}}\ \ \text{is decreasing in } (0,\infty).$$
Still by (4), we have that $$t \mapsto \frac{1}{p_N}f(t)t-F(t)\ \ \text{is increasing in } (0,\infty).$$
Thus for any $u \neq 0$ $$t \mapsto \Phi(tu)-\frac{1}{p_N}\Phi'(tu)tu=\sum_{i=1}^N \left(\frac{1}{p_i}-\frac{1}{p_N} \right)t^{p_i-p_N}\| \partial_i u\|_{p_i}^{p_i}+\int_\Omega \left(\frac{1}{p_N}f(tu)t-F(tu)\right)$$ is increasing in $(0,\infty)$.

Finally, it is clear that $u \mapsto \Phi'(u)u$ and $\mapsto \Phi(u)-\frac{1}{p_N}\Phi'(u)u$ are weakly lower semicontinuous on $X$, and $\Phi$ is even.

In view of Theorem \ref{tp} and Remark \ref{r1}-(1), we infer that \eqref{anis} has a nontrivial and non-negative ground state.
\end{proof}

\begin{cor}
Under the assumptions of Corollary \ref{c3}, the problem \eqref{anis} has infinitely many pairs of solutions.
\end{cor}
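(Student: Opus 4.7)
The plan is to verify that the functional $\Phi$ defined in \eqref{phan} satisfies the Palais--Smale condition on $\mathcal{N}$; since $\Phi$ is even, Corollary \ref{cp} will then yield infinitely many pairs of solutions. Let $(u_n)\subset\mathcal{N}$ be a Palais--Smale sequence for $\Phi$. By Remark \ref{r1}-(4), the sequence $(u_n)$ is bounded in $X$, so up to a subsequence $u_n\rightharpoonup u_0$ in $X$; by the compact embedding $X\subset L^\alpha(\Omega)$ we also have $u_n\to u_0$ in $L^\alpha(\Omega)$ and a.e.\ in $\Omega$.

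Next I would show that the $f$-term vanishes in the limit. Using the growth of $f$ coming from (1) and (3) of Corollary \ref{c3} (an estimate analogous to \eqref{ef} and \eqref{ef2}) together with H\"older's inequality and the strong convergence $u_n\to u_0$ in $L^\alpha(\Omega)$, one obtains $\int_\Omega f(u_n)(u_n-u_0)\to 0$. Combined with $\Phi'(u_n)(u_n-u_0)=o(1)$, this gives
$$\sum_{i=1}^N \int_\Omega |\partial_i u_n|^{p_i-2}\,\partial_i u_n\,\partial_i(u_n-u_0)\to 0.$$
For each $i$, the weak convergence $u_n\rightharpoonup u_0$ in $X$ forces $\partial_i u_n\rightharpoonup \partial_i u_0$ in $L^{p_i}(\Omega)$, so $\int_\Omega |\partial_i u_0|^{p_i-2}\partial_i u_0\,\partial_i(u_n-u_0)\to 0$. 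Subtracting yields
$$\sum_{i=1}^N \int_\Omega \Bigl(|\partial_i u_n|^{p_i-2}\partial_i u_n-|\partial_i u_0|^{p_i-2}\partial_i u_0\Bigr)\partial_i(u_n-u_0)\to 0.$$

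By the standard monotonicity of the map $\xi\mapsto |\xi|^{p_i-2}\xi$ on $\mathbb{R}$, every summand above is non-negative, hence each summand tends to zero individually. The classical $(S_+)$ property of $-\Delta_{p_i}$ (equivalently, the fact that the monotone-plus-weak-convergence argument implies strong convergence in $L^{p_i}(\Omega)$) then yields $\partial_i u_n\to \partial_i u_0$ in $L^{p_i}(\Omega)$ for every $i=1,\dots,N$. Summing these norms gives $u_n\to u_0$ in $X$, which completes the verification of Palais--Smale on $\mathcal{N}$.

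The main obstacle, compared to the Kirchhoff case treated in Corollary \ref{c22}, is that the anisotropic norm involves $N$ distinct Lebesgue exponents, so one cannot appeal directly to the uniform convexity of a single space. The non-negativity of each term in the monotone sum is what allows the argument to be decoupled component by component, reducing the strong convergence in $X$ to the well-known $(S_+)$ property of each one-dimensional derivative operator.
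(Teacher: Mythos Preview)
Your proof is correct and follows essentially the same route as the paper's: boundedness from Remark \ref{r1}-(4), the vanishing of $\int_\Omega f(u_n)(u_n-u_0)$, the subtraction of the weak-limit term, and then the componentwise monotonicity argument to force $\partial_i u_n\to\partial_i u_0$ in $L^{p_i}(\Omega)$. The only difference is presentational: where you invoke the $(S_+)$/monotone-plus-weak-convergence property as a known fact, the paper spells it out explicitly via Simon's inequalities \cite{Si}, distinguishing the cases $p_i\ge 2$ and $1<p_i<2$ (the latter requiring a H\"older step) to obtain the quantitative lower bound $\Phi'(u_n)(u_n-u_0)\ge C\bigl(\sum_{p_i\ge 2}\|\partial_i u_n-\partial_i u_0\|_{p_i}^{p_i}+\sum_{1<p_i<2}\|\partial_i u_n-\partial_i u_0\|_{p_i}^{2}\bigr)+o(1)$.
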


\begin{proof}
Let $(u_n) \subset \mathcal{N}$ be a Palais-Smale sequence for $\Phi$, defined in \eqref{phan}. Since $(u_n)$ is bounded,  passing to a subsequence, if necessary, we have
$$
u_{n}\rightharpoonup u \ \ \mbox{in} \ \
D_{0}^{1,\overrightarrow{p}}(\Omega), \ \
u_{n}\rightarrow u \ \ \mbox{in} \ \ L^{\sigma}(\Omega) 
\mbox{ for} \ \ \sigma \in [1, p^{*}),\ \
\text{and} \ \
u_{n}\rightarrow u \ \ \mbox{a.e in} \ \ \Omega.
$$
Since $|f(t)|\leq C(1+|t|^{\alpha-1})$ for every $t \in \Re$, we get
\begin{eqnarray}\label{converg1}
\displaystyle\int_{\Omega}f(u_{n})u_n  -
\displaystyle\int_{\Omega}f(u_n)u = o(1).
\end{eqnarray}
Now, from $u_{n}\rightharpoonup u$ in $D_{0}^{1,\overrightarrow{p}}(\Omega)$ we have
\begin{eqnarray}\label{converg2}
\displaystyle\sum^{N}_{i=1}\displaystyle\int_{\Omega}|\partial_i u|^{p_{i}-2}\partial_i u\ \partial_i u_n
-\displaystyle\sum^{N}_{i=1}\displaystyle\int_{\Omega}|\partial_i u|^{p_i}= o(1).
\end{eqnarray}
We use now the standard inequality (cf. \cite{Si}) $$
(|x|^{p-2}x-|y|^{p-2}y)(x-y) \geq \begin{cases} C_{p}|x-y|^{p}, & \text{if } p\geq 2\\ C_p\frac{|x-y|^{2}}{(|x|+|y|)^{2-p}}, & \text{if } 1<p<2, \end{cases}
$$ which holds for some $C_p>0$.
Note that if $1<p_i< 2$ then 
\begin{eqnarray*}
\|\partial_i u_n- \partial_i u\|_{p_i}^{p_i} &=&\int_\Omega \frac{|\partial_i u_n- \partial_i u|^{p_i}}{\left(|\partial_i u_n|+|\partial_i u|\right)^{\frac{p_i(2-p_i)}{2}}}\left(|\partial_i u_n|+|\partial_i u|\right)^{\frac{p_i(2-p_i)}{2}} \\&\leq & \left(\int_\Omega \frac{|\partial_i u_n- \partial_i u|^2}{\left(|\partial_i u_n|+|\partial_i u|\right)^{2-p_i}}\right)^{\frac{p_i}{2}} \left(\int_\Omega \left(|\partial_i u_n|+|\partial_i u|\right)^{p_i}\right)^{\frac{2-p_i}{2}}\\ &\leq &
C \left(\int_{\Omega}\left(|\partial_i u_n|^{p_{i}-2}\partial_i u_n- |\partial_i u|^{p_{i}-2}\partial_i u\right)\left(\partial_i u_n-\partial_i u\right)\right)^{\frac{p_i}{2}}
\end{eqnarray*} 
Thus
$$\int_{\Omega}\left(|\partial_i u_n|^{p_{i}-2}\partial_i u_n- |\partial_i u|^{p_{i}-2}\partial_i u\right)\left(\partial_i u_n-\partial_i u\right)\geq 
\begin{cases}
C\|\partial_i u_n- \partial_i u\|_{p_i}^{p_i}, & \text{if } p_i\geq2 \\
C\|\partial_i u_n- \partial_i u\|_{p_i}^2, & \text{if } 1<p_i<2.
\end{cases}
$$
for some $C>0$,
and consequently
\begin{eqnarray*}
\Phi'(u_n)(u_n-u)&=& \sum_{i=1}^N \int_{\Omega}|\partial_i u_n|^{p_{i}-2}\partial_i u_n\left(\partial_i u_n-\partial_i u\right)+o(1)\\
&=&
\sum_{i=1}^N \int_{\Omega}\left(|\partial_i u_n|^{p_{i}-2}\partial_i u_n- |\partial_i u|^{p_{i}-2}\partial_i u\right)\left(\partial_i u_n-\partial_i u\right)+o(1)\\
&\geq &C\left(\sum_{p_i\geq 2} \|\partial_i u_n- \partial_i u\|_{p_i}^{p_i} + \sum_{1<p_i<2} \|\partial_i u_n- \partial_i u\|_{p_i}^2\right)+o(1)
\end{eqnarray*}
Therefore $\|\partial_i u_n- \partial_i u\|_{p_i} \rightarrow 0$ for $i=1,...,N$, so that $u_{n}\rightarrow u$ in
$D_{0}^{1,\overrightarrow{p}}(\Omega)$ and the proof is complete.
\end{proof}

\end{document}